\documentclass[reqno,oneside,11pt]{amsart}
\usepackage{amsfonts,amssymb,amsthm,amsbsy,amscd,amsmath,bm,calc,circuitikz,color,dsfont,euscript, enumerate, geometry,graphicx,ifthen,latexsym,manfnt,marvosym,mathrsfs,marvosym,setspace,stmaryrd,textcomp,todonotes,tikz,tikz-cd, tcolorbox,upgreek,verbatim,xcolor}
\usepackage[utf8]{inputenc}
\usetikzlibrary{positioning,shapes,calc}
\definecolor{bulgarianrose}{rgb}{0.28, 0.02, 0.03}
\usepackage[linkcolor=bulgarianrose,citecolor=bulgarianrose,colorlinks=true,hypertexnames=false]{hyperref}
\allowdisplaybreaks

\newtheorem{theorem}{Theorem}[section]

\theoremstyle{definition}
\newtheorem*{definition}{Definition}
\newtheorem{construction}[theorem]{Construction}

\newtheorem*{remark}{Remark}

\newtheorem*{example}{Example}
\newtheorem*{acknowledgement}{Acknowledgement}
\numberwithin{equation}{section}

\newcommand{\PG}{\ensuremath{{\textnormal{PG}}}}

\makeatletter
\def\imod#1{\allowbreak\mkern10mu({\operator@font mod}\,\,#1)}
\def\@textbottom{\vskip\z@\@plus 18pt}
\let\@texttop\relax
\makeatother

\title[Upper bounds of the Tur\'{a}n density]{Upper bounds on the Tur\'{a}n density of projective hypergraphs over a finite field}
\author{Subhankar Dash}
\author{Kaushik Majumder}

\address{\newline 
\newline (a) School of Mathematical Sciences\\ \newline National Institute of Science Education and Research (NISER) Bhubaneswar\\\newline Jatni, Khurda-$752050$, Odisha, India.
\newline (b) Homi Bhabha National Institute (HBNI)\\ \newline Training School Complex, Anushakti Nagar, Mumbai- $400094$, India.
\newline \textnormal{\textestimated-Mails}: (Subhankar Dash) {\tt subhankar.dash\MVAt niser.ac.in, isubha.apple\MVAt gmail.com}
\newline \textnormal{\textestimated-Mails}: (Kaushik Majumder) {\tt kaushikmajumder\MVAt niser.ac.in, kaushikbnmajumder\MVAt gmail.com}}

\subjclass[2020]{Primary: 05C65, 05D05, 51E15.}
\keywords{$r-$graph, Tur\'{a}n density, projective plane and hyperplane, $\mathsf{L}^{p}-$method.}

\begin{document}

\begin{abstract}
For a prime power $q\geq2$ and an integer $m\geq2$, let $\PG(m,q)$ denote the $m-$dimensional projective hypergraph over the finite field $\mathbb{F}_q$. Employing the $\mathsf{L}^p-$method and using the incidence structure, we establish the Tur\'{a}n density of $\PG(m,q)$
\begin{equation*}
\pi(\PG(m,q))\leq\begin{cases}
  \left(1-\frac{1}{q+1}\right)^{\frac{1}{q}} & \text{ if } m=2,\\ 
  \left(1-\frac{1}{(q+1)|E(\PG(m-1,q))|}\right)^{\frac{1}{1+q^{m-1}}} & \text{ if } m\geq3.
\end{cases}   
\end{equation*}
The previously known upper bound on its Tur\'{a}n density was $1-\binom{q^m}{q}^{-1}$. As a consequence of our estimate, this upper bound is improved for each prime power $q\geq3$ and for each dimension index $m\geq2$. 

For a positive integer $t$, let $B_{t}[\PG(2,q)]$ be the $(q+1)-$graph formed by taking $t$ copies of the $\PG(2,q)$ that shares a common edge. Using the $\mathsf{L}^p-$method, we establish that the upper bound of the Tur\'{a}n density of $B_{t}[\PG(2,q)]$ is $\left(1-\frac{1}{q+1}\right)^{\frac{1}{qt}}$.
\end{abstract}

\maketitle

\section{Introduction}

One of the core problems in extremal combinatorics is to determine the Tur\'{a}n numbers. For an $r-$uniform hypergraph or simply $r-$graph $H$, the \emph{Tur\'{a}n number} of $H$ is the maximum number of $r-$edges in an $H-$free $r-$graph on $n$ vertices. We denote this number by $\mathrm{ex}(n,H)$. For such $H$, we note that the sequence $\left\{\binom{n}{r}^{-1}\mathrm{ex}(n,H)\right\}_{n=r}^{\infty}$ forms a monotone decreasing sequence in $[0,1]$. Hence it converges and has a limit, 
\begin{equation*}
\underset{n\to\infty}{\lim}\frac{\mathrm{ex}(n,H)}{\binom{n}{r}}=\inf\left\{\frac{\mathrm{ex}(n,H)}{\binom{n}{r}}:n\geq r\right\}.     
\end{equation*}
This limit is called the \emph{Tur\'{a}n density} of the $r-$graph $H$, denoted by $\pi(H)$.

An old conjecture of Vera T. S\'{o}s \cite{MR543051} asked whether Tur\'{a}n density of Fano plane free $3-$graph is $\frac{3}{4}$. Many authors answered affirmatively \cite{MR1750899,MR2160414,MR2176425} and established that, the Tur\'{a}n number
\begin{equation*}
\mathrm{ex}(n,\PG(2,2))=\binom{n}{3}-\binom{\lfloor n/2\rfloor}{3}-\binom{\lceil n/2\rceil}{3},
\end{equation*}
for sufficiently large integer values of $n$. These results and the incidence structures motivate the study of similar Turán problems for hypergraphs arising from finite projective geometries. It is established \cite[Theorem~1.1]{MR2156214} that for each dimension index $m\geq2$,
\begin{equation}\label{Keevash bound}
\overset{q}{\underset{t=1}\prod}\bigg(1-\frac{t}{q+q^{2}+\ldots+q^{m}}\bigg)\leq\pi(\PG(m,q))\leq1-\frac{1}{\binom{q^{m}}{q}}.    
\end{equation}
For $q=2$, the corresponding bounds on the Tur\'{a}n density \cite[Theorem~1.2]{MR2156214} are:
\begin{align*}
\frac{(2^{m+1}-3)(2^{m+1}-4)}{(2^{m+1}-2)^2}\leq\pi(\PG(m,2))\leq\begin{cases}
1-\frac{3}{2^{2m}-1}&\quad\textnormal{ if }m\textnormal{ is odd},\\
1-\frac{6}{(2^{m}-1)(2^{m+1}+1)}&\quad\textnormal{ if }m\textnormal{ is even}.
\end{cases}
\end{align*}
In this article, Theorem~\ref{Turan density of projective plane} and Theorem~\ref{Turan density of projective hyperplane} shows that: 
\begin{equation*}
\pi(\PG(m,q))\leq\begin{cases}
  \bigg(1-\frac{1}{q+1}\bigg)^{\frac{1}{q}} & \text{ if } m=2,\\ 
  \bigg(1-\frac{(q-1)^{2}}{(q^{m}-1)(q^{m-1}-1)}\bigg)^{\frac{1}{1+q^{m-1}}} & \text{ if } m\geq3
\end{cases}   
\end{equation*}
which improves the upper bound~\cite{MR2156214} for each prime power $q\geq3$ and each integer $m\geq2$. For the best known lower bound of $\pi(\PG(m,q))$ we refer \cite[Theorem~1.2]{MR4548958}.

We establish our results in two different sections. In section~3 we deal with the case of $m=2$, and $m\geq3$ in the next section. This is due to the fact that $E(\PG(m,q))$ forms an intersecting family iff $m=2$. For $\PG(2,q)$, the edges adjacent to the vertices of the edge at infinity cover all other edges. So, fixing one edge, we write our local forbidden constraint. This yields a better result. However, for $m\geq3$, the same technique does not work. This forces us to write our local forbidden constraint by summing over the edges at infinity.
 
Our proof proceeds in two main stages. First, we use the incidence structure of the \\
$m-$dimensional projective hypergraph over the $\mathbb{F}_{q}$. Here we write the counting problem as a suitable combination of sum and products of indicator functions to express the structure in its analytic form and characterize the forbidden properties. This gives rise to a \emph{local forbidden constraint}. This portion is the main non-trivial key step of this article. The local forbidden constraint implies the \emph{global forbidden constraint}. These terminologies will make more sense in section~3 and section~4. In the second stage, we put the global forbidden constraint to a form by interchanging summations (discrete version of the Fubini's theorem). Such interchange allows us to apply the following version of the H\"{o}lder's inequality:- for $p\in[1,\infty)$ and $a_{i}\geq0$,
\begin{equation*}
\underset{i\in A}{\sum}a_{i}^{p}\geq\frac{1}{|A|^{p-1}}\bigg(\underset{i\in A}{\sum}a_{i}\bigg)^{p},
\end{equation*}
where $A$ is a non-empty finite index set. The structural properties of projective hypergraph combined with suitable use of the Fubini's theorem and H\"{o}lder's inequality, yield the required extremal estimate.

Motivated from \cite{MR2110982, MR137661}, we consider the $(q+1)-$graph analogue of the \emph{book graph} involving the copies of the projective plane $\PG(2,q)$ as pages for further application of the $\mathsf{L}^p-$method. The primary motivation for considering $\PG(2,q)$ is that it has an equal number of points and lines, making it analogue of the cycle graph. Its rich incidence structure and the symmetry between points and lines make it a suitable building block for a blow-up construction, in which multiple copies of $\PG(2,q)$ proliferate around a common line, forming the pages of a book.

Informally, for a positive integer $t$ and an $r-$graph $H$, the \emph{$t-$page book with pages $H$}, denoted by $B_{t}[H]$, is the $r-$graph obtained from $t$ mutually disjoint copies of $H$ by identifying one $r-$edge from each copy into a single common edge. Such common edge is called as the \emph{spine} of the book. We note here that, $B_{1}[H]=H$. The term ``identification'' comes from the familiar geometric construction in which two parallel edges of a rectangle are identified (or glued together) to form a cylinder. For the formal definition and construction of $B_{t}[H]$, we refer the reader to the last section of this article, where these notions are presented in an abstract setting. In the last section of this article, we study the upper bound of Tur\'{a}n density of $B_{t}[\PG(2,q)]$. In Theorem~\ref{Turan density of projective book}, we establish that
\begin{equation*}
\pi(B_{t}[\PG(2,q)])\leq\left(\frac{q}{q+1}\right)^{\frac{1}{qt}}.
\end{equation*}

We refer this proof methodological framework as the \emph{$\mathsf{L}^{p}$-method}. It is inspired by the proof techniques developed in \cite{MR2195580} and the proof of the Mantel's theorem in \cite{MR4603631}. Applying this method, we also give an alternative proof of the classical upper bound of Erd\"{o}s result for complete $r$-partite $r$-graphs in \cite{DM}. The method may also be useful for other extremal problems involving multilinear counting expressions.

\section{Preliminaries}

In this section, we define the terminologies used in this article. We begin by defining the $m-$dimensional projective hypergraph $\PG(m,q)$ as given in \cite{MR2156214}.
\begin{definition}
Let $\mathbb{F}_q$ denote the finite field with $q$ elements, where $q\geq2$ is a prime power. For an integer $m\geq2$ the $m-$dimensional projective hypergraph $\PG(m,q)$ over $\mathbb{F}_{q}$ is defined as follows. Its vertex set consists of all the one dimensional subspaces of $\mathbb{F}_q^{m+1}$ and each edge set is the set of one dimensional subspaces of a two dimensional subspace. That is,
\begin{align*}
V(\PG(m,q))&=\{\langle u\rangle:u\in \mathbb{F}_q^{m+1}\}\\
E(\PG(m,q))&=\{E_{W}:W\text{ is a $2-$dimensional subspace of }\mathbb{F}_q^{m+1}\},
\end{align*}
where $E_{W}=\{\langle u\rangle:u\in W\smallsetminus\{0\}\}$ for each $2-$dimensional subspace $W$ of $\mathbb{F}_q^{m+1}$.
\end{definition}

\begin{example}
The projective plane $\PG(2,2)$ is popularly known as Fano Plane. Its vertex set and edge set is,
\begin{align*}
V(\PG(2,2))&=\bigg\{\langle (1,0,0)\rangle,\langle (0,1,0)\rangle,\langle (0,0,1)\rangle,\langle (1,1,0)\rangle,\langle (0,1,1)\rangle,\langle (1,0,1)\rangle,\langle (1,1,1)\rangle\bigg\}\\
E(\PG(2,2))&=\bigg\{\left\{\langle(1,0,0)\rangle,\langle(0,1,0)\rangle,\langle(1,1,0)\rangle\right\},\left\{\langle(1,0,0)\rangle,\langle(0,0,1)\rangle,\langle(1,0,1)\rangle\right\},\\
&\hspace{0.85cm}\left\{\langle(0,1,0)\rangle,\langle(0,0,1)\rangle,\langle(0,1,1)\rangle\right\},\left\{\langle(1,0,0)\rangle,\langle(0,1,1)\rangle,\langle(1,1,1)\rangle\right\},\\
&\hspace{0.85cm}\left\{\langle(0,1,0)\rangle,\langle(1,0,1)\rangle,\langle(1,1,1)\rangle\right\},\left\{\langle(0,0,1)\rangle,\langle(1,1,0)\rangle,\langle(1,1,1)\rangle\right\},\\
&\hspace{0.85cm}\left\{\langle(1,1,0)\rangle,\langle(0,1,1)\rangle,\langle(1,0,1)\rangle\right\}\bigg\}.
\end{align*}
For notational simplicity, we denote  $V(\PG(2,2))=\{x,y,z,a,b,c,d\}$ and 
\begin{align*}
E(\PG(2,2)=\left\{\{x,y,z\},
\begin{aligned}
&\{x,a,b\},\{y,a,d\},\{z,a,c\},\\
&\{x,c,d\},\{y,b,c\},\{z,b,d\}  
\end{aligned}
\right\}.    
\end{align*}
\end{example}

Using the following construction, we write down the structure of $\PG(m,q)$ explicitly.

\begin{construction}
Let $m\ge2$ and let $\mathbb{F}_q$ be the finite field with $q-$elements for a prime power integer $q\geq2$. Associate a symbol $x_{a}$ for each $a\in\mathbb{F}_q^m$ and construct
\begin{equation*}
L_{a,u}=\{x_{a+tu}:t\in\mathbb{F}_q\}.
\end{equation*}
for each $a\in\mathbb{F}_q^m$ and $u\in\mathbb{F}_q^m\setminus\{0\}$. Define the direction of $L_{a,u}$ as $\text{dir}(L_{a,u})=\{\langle u\rangle\}$.

We construct the hypergraph $H(m,q)$ with vertex and edge sets
\begin{align*}
V(H(m,q))&=\{x_{a}:a\in\mathbb{F}_q^m\}\sqcup\{\langle u\rangle:u\in\mathbb{F}_q^m\setminus\{0\}\}\\
E(H(m,q))&=\left\{L_{a,u}\sqcup\{\langle u\rangle\}:a\in\mathbb{F}_q^m,\ u\in\mathbb{F}_q^m\setminus\{0\}\right\}\sqcup\\
&\hspace{2cm}\sqcup\left\{E_W:W\text{ is a two dimensional subspace of }\mathbb{F}_q^m\right\}.
\end{align*}
Edges of the form $L_{a,u}\sqcup\{\langle u\rangle\}$ are called \emph{extended affine edges}. Edges of the form $E_{W}$ are called \emph{edges at infinity}. 
\end{construction}

\begin{theorem}
The $(q+1)-$hypergraph $H(m,q)$ is isomorphic to $\PG(m,q)$.
\end{theorem}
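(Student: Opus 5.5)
We will write down an explicit bijection between vertex sets and check that it carries edges onto edges. Define $\phi:V(H(m,q))\to V(\PG(m,q))$ by
\begin{equation*}
\phi(x_a)=\langle(1,a)\rangle \quad (a\in\mathbb{F}_q^m),\qquad \phi(\langle u\rangle)=\langle(0,u)\rangle \quad (u\in\mathbb{F}_q^m\setminus\{0\}),
\end{equation*}
where $(1,a),(0,u)\in\mathbb{F}_q^{m+1}$. This is well defined, since $\langle u\rangle$ is a one-dimensional subspace and $(0,u)$ depends on $u$ only up to scalars. It is also a bijection. Every nonzero vector of $\mathbb{F}_q^{m+1}$ either has nonzero first coordinate, and is then a unique scalar multiple of some $(1,a)$, or has first coordinate $0$. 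As a check, $q^m+\frac{q^m-1}{q-1}=\frac{q^{m+1}-1}{q-1}$.

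\textbf{Affine edges.} Let $E=L_{a,u}\sqcup\{\langle u\rangle\}$ and $W=\mathrm{span}\{(1,a),(0,u)\}$, which is $2$-dimensional because $u\neq0$. Its nonzero vectors are $\lambda(1,a)+\mu(0,u)$. If $\lambda\neq0$, the vector spans $\langle(1,a+(\mu/\lambda)u)\rangle=\phi(x_{a+tu})$ with $t=\mu/\lambda$. If $\lambda=0$, it spans $\langle(0,u)\rangle=\phi(\langle u\rangle)$. Hence $\phi(E)=E_W$.

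\textbf{Edges at infinity.} If $W'\subseteq\mathbb{F}_q^m$ is $2$-dimensional, then $\phi(E_{W'})=E_{0\oplus W'}$, and $0\oplus W'$ is $2$-dimensional in $\mathbb{F}_q^{m+1}$. So $\phi$ maps edges into edges. It is injective on edges because $\phi$ is injective on vertices and distinct edges are distinct vertex sets.

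\textbf{Surjectivity on edges.} This is the main bookkeeping step. Take a $2$-dimensional $W\subseteq\mathbb{F}_q^{m+1}$.
\begin{itemize}
\item If $W\subseteq\{0\}\times\mathbb{F}_q^m$, then $W=0\oplus W'$ and $E_W=\phi(E_{W'})$.
\item Otherwise the first-coordinate functional restricted to $W$ is nonzero. Its kernel $W\cap(\{0\}\times\mathbb{F}_q^m)$ is then $1$-dimensional, say spanned by $(0,u)$. Pick $(1,a)\in W$. Then $W=\mathrm{span}\{(1,a),(0,u)\}$, so $E_W=\phi(L_{a,u}\sqcup\{\langle u\rangle\})$.
\end{itemize}

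Finally, the edge set of $H(m,q)$ is written as a set of edges, with repetitions collapsing: different pairs $(a,u)$ give the same line. Therefore $\phi$ induces a bijection $E(H(m,q))\to E(\PG(m,q))$, and $H(m,q)\cong\PG(m,q)$.
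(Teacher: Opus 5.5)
Your proof is correct and follows the paper's approach. The paper uses the same homogenization map, with the extra coordinate $1$ or $0$ placed last rather than first, and simply asserts that it is an isomorphism; your checks of the vertex bijection, the images of both edge types, and surjectivity on edges (via the kernel of the first-coordinate functional) supply the details the paper leaves implicit.
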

\begin{proof}
Define $\phi:V(H(m,q))\longrightarrow V(\PG(m,q))$ by 
\begin{align*}
\phi(x_{a_1,\ldots,a_m})&=\langle(a_1,\ldots,a_m,1)\rangle,\forall a_1,\cdots,a_m\in\mathbb{F}_q\\
\phi(\langle (u_1,\cdots,u_m)\rangle)&=\langle(u_1,\ldots,u_m,0)\rangle,\forall(u_{1},\cdots,u_{m})\in\mathbb{F}_q^m\setminus\{0\}.
\end{align*}
Then $\phi$ forms the required isomorphism between $H(m,q)$ and $\PG(m,q)$.
\end{proof}

We have the following results that follows from the incidence properties of $\PG(m,q)$.
\begin{theorem}
\label{properties of projective plane}
The $m-$dimensional projective geometry over $\mathbb{F}_q$ i.e. $\PG(m,q)$ satisfy the following properties .
\begin{enumerate}[\normalfont(a)]
\item $|V(\PG(m,q))|=\frac{q^{m+1}-1}{q-1}=1+q+\cdots+q^m$,
\item $|E(\PG(m,q))|=\frac{(q^{m+1}-1)(q^{m}-1)}{(q^{2}-1)(q-1)}$,
\item $\PG(m,q)$ is $q+1$ uniform i.e. each hyperedge has size $q+1$,
\item $\PG(m,q)$ is regular, in the sense that each vertex contained in exactly 
\begin{equation*}
 \frac{q^{m}-1}{q-1}=1+q+\cdots+q^{m-1}.   
\end{equation*}
many hyperedges.
\item $E(\PG(m,q))$ forms an intersecting family iff $m=2$.
\end{enumerate}
\end{theorem}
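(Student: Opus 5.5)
We will prove all five items by direct counting of subspaces of $\mathbb{F}_q^{m+1}$, working in the original definition of $\PG(m,q)$; the isomorphic model $H(m,q)$ from the preceding theorem is not needed.

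\emph{Items (a) and (c).} The vertices are the one dimensional subspaces. Each such subspace $\langle u\rangle$ contains exactly $q-1$ nonzero vectors. Distinct one dimensional subspaces meet only in $0$, so the $q^{m+1}-1$ nonzero vectors split into $(q^{m+1}-1)/(q-1)$ classes. This gives (a). Applying the same count to a $2$-dimensional $W$, the set $E_W$ has $(q^2-1)/(q-1)=q+1$ points, which gives (c).

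\emph{Item (b).} We count ordered pairs of linearly independent vectors, which number $(q^{m+1}-1)(q^{m+1}-q)$. Each $2$-dimensional subspace is spanned by exactly $(q^2-1)(q^2-q)$ such ordered bases. Dividing and cancelling a factor of $q$ gives
\[
\frac{(q^{m+1}-1)(q^m-1)}{(q^2-1)(q-1)}.
\]
We also check that distinct $W$ give distinct edges $E_W$, since $W$ is the span of the points of $E_W$.

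\emph{Item (d).} Fix a vertex $\langle u\rangle$. The $2$-dimensional subspaces containing $u$ correspond bijectively to $1$-dimensional subspaces of the quotient $\mathbb{F}_q^{m+1}/\langle u\rangle\cong\mathbb{F}_q^{m}$. By the count in (a) there are $(q^m-1)/(q-1)$ of these. Alternatively, double count incidences: the number of incidences is $(q+1)|E|=|V|\cdot r$ for each vertex, and vertex transitivity under $\mathrm{GL}_{m+1}(q)$ gives regularity.

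\emph{Item (e).} This is the only step needing a structural argument. Take two edges $E_W$ and $E_{W'}$. By the dimension formula,
\[
\dim(W\cap W')=4-\dim(W+W')\ge 4-(m+1)=3-m.
\]
If $m=2$, this forces $\dim(W\cap W')\ge1$, so $E_W\cap E_{W'}\neq\emptyset$ and the edge family is intersecting. If $m\ge3$, take $W=\langle e_1,e_2\rangle$ and $W'=\langle e_3,e_4\rangle$, which exist since $m+1\ge4$. These intersect only in $0$, so $E_W\cap E_{W'}=\emptyset$ and the family is not intersecting.

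Nothing in this argument is a real obstacle. The one point needing care is (e), specifically recording both directions of the "iff" explicitly.
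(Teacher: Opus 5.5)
Your proof is correct and complete for all five items, and it keeps both directions of the ``iff'' in (e) within the paper's standing assumption $m\geq2$.

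The paper does not prove this theorem. It only says the properties ``follow from the incidence properties of $\PG(m,q)$'' and states them, so there is no argument of the paper's to compare against. Your direct counting in $\mathbb{F}_q^{m+1}$ supplies what the paper omits:
\begin{enumerate}[(i)]
\item partitioning nonzero vectors into lines for (a) and (c);
\item counting ordered bases for (b), together with the injectivity of $W\mapsto E_W$;
\item passing to the quotient $\mathbb{F}_q^{m+1}/\langle u\rangle$ for (d);
\item the dimension formula for (e).
\end{enumerate}

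A route closer to the rest of the paper would use the affine model $H(m,q)$. It gives the recursions $|V(\PG(m,q))|=q^m+|V(\PG(m-1,q))|$ and $|E(\PG(m,q))|=q^{m-1}\frac{q^m-1}{q-1}+|E(\PG(m-1,q))|$. These come from counting affine points plus directions, and extended affine edges plus edges at infinity. Items (a) and (b) then follow by induction. That route, however, rests on the isomorphism $H(m,q)\cong\PG(m,q)$, which the paper only sketches. Your approach works straight from the definition and needs nothing further.

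One small point worth stating explicitly: you read the vertex set as $\{\langle u\rangle: u\neq 0\}$. The paper's displayed definition formally allows $u=0$, but its verbal description (``one dimensional subspaces'') matches your reading.
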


\begin{construction}
Let $u\in\mathbb{F}_q^m\smallsetminus\{0\}$. Construct an equivalence relation $\sim_u$ on the affine vertices $\left\{x_{a}:a\in\mathbb{F}_q^m\right\}$ as follows. 
\begin{equation*}
x_{a_1,\ldots,a_m}\sim_u x_{b_1,\ldots,b_m}\text{if and only if }b_i-a_i=tu_i,\forall i\in[m]\text{ for some }t\in\mathbb{F}_q.
\end{equation*}
Then $[x_{a_1,\ldots,a_m}]_{\sim_u}=L_{a,u}$ where $a=(a_1,\ldots,a_m)$.

Again construct another equivalence relaton $\parallel$ on the set of extended affine edges \\ $\{L_{a,u}\sqcup\{\langle u\rangle\}:a\in\mathbb{F}_q^m,\ u\in\mathbb{F}_q^m\setminus\{0\}\}$ as follows.
\begin{equation*}
L_{a,u}\sqcup\{\langle u\rangle\}\parallel L_{b,v}\sqcup\{\langle v\rangle\}\text{ if and only if }\text{dir}(L_{a,u}\sqcup\{\langle u\rangle\})=\text{dir}(L_{b,v}\sqcup\{\langle v\rangle\})\text{ i.e. }\langle u\rangle=\langle v\rangle.
\end{equation*}
We call $P_{\langle u\rangle}=[L_{a,u}\sqcup\{\langle u\rangle\}]_{\parallel}$ a parallel class with direction $\langle u\rangle$. Then we have a collection of parallel classes
\begin{equation*}
S(x_a:a\in\mathbb{F}_q^m)=\left\{P_{\langle u\rangle}(x_a:a\in\mathbb{F}_q^m):u\in\mathbb{F}_q^m\smallsetminus\{0\}\right\}.
\end{equation*}
\end{construction}

\begin{remark}
Each $[L_{a,u}]_{\sim_u}$ partitions the set of the affine vertices, however the collection of parallel classes partitions the set of extended affine edges. Hence each parallel class has size $q^{m-1}$ and there are total $\frac{q^m-1}{q-1}=1+q+\cdots+q^{m-1}$ many parallel classes.
\end{remark}

\begin{example}
We list down the parallel classes in case of $\PG(3,2)$ below. The vertex set is
\begin{equation*}
 V(\PG(3,2))=\left\{x_a:a\in\mathbb{F}_2^3\right\}\sqcup\left\{x,y,z,\alpha,\beta,\gamma,\delta\right\}.
\end{equation*}
For notational simplicity, we write it as follows.
\begin{equation*}
 V(\PG(3,2))=\left\{x_a:a\in[8]\right\}\sqcup\left\{x,y,z,\alpha,\beta,\gamma,\delta\right\}.
\end{equation*}
The edge set is, 
\begin{align*}
E(\PG(3,2))&=\underbrace{\left\{\begin{aligned}
 &\{x,x_{1},x_{2}\},\{x,x_{3},x_{4}\},\{x,x_{5},x_{6}\},\{x,x_{7},x_{8}\},\\
&\{y,x_1,x_3\},\{y,x_2,x_4\},\{y,x_5,x_7\},\{y,x_6,x_8\},\\
&\{z,x_1,x_5\},\{z,x_2,x_6\},\{z,x_3,x_7\},\{z,x_4,x_8\},\\
&\{\alpha,x_1,x_8\},\{\alpha,x_2,x_7\},\{\alpha,x_3,x_6\},\{\alpha,x_4,x_5\},\\
&\{\beta,x_1,x_4\},\{\beta,x_2,x_3\},\{\beta,x_5,x_8\},\{\beta,x_6,x_7\},\\
&\{\gamma,x_1,x_7\},\{\gamma,x_2,x_8\},\{\gamma,x_3,x_5\},\{\gamma,x_4,x_6\},\\
&\{\delta,x_1,x_6\},\{\delta,x_2,x_5\},\{\delta,x_3,x_8\},\{\delta,x_4,x_7\}\\
\end{aligned}\right\}}_{\textnormal{extended affine edges}}\\
&\hspace{2cm}\bigsqcup\underbrace{\left\{\{x,y,z\},
\begin{aligned}
&\{x,\alpha,\beta\},\{y,\alpha,\delta\},\{z,\alpha,\gamma\},\\
&\{x,\gamma,\delta\},\{y,\beta,\gamma\},\{z,\beta,\delta\}  
\end{aligned}
\right\}}_{\textnormal{edges at infinity}}.       
\end{align*}

The set of parallel classes is
\begin{align*}
S(x_a:a\in[8])=\left\{\begin{aligned}
 &P_x=\left\{\{x_{1},x_{2}\},\{x_{3},x_{4}\},\{x_{5},x_{6}\},\{x_{7},x_{8}\}\right\},\\
&P_y=\left\{\{x_1,x_3\},\{x_2,x_4\},\{x_5,x_7\},\{x_6,x_8\}\right\},\\
&P_z=\left\{\{x_1,x_5\},\{x_2,x_6\},\{x_3,x_7\},\{x_4,x_8\}\right\},\\
&P_{\alpha}=\left\{\{x_1,x_8\},\{x_2,x_7\},\{x_3,x_6\},\{x_4,x_5\}\right\},\\
&P_{\beta}=\left\{\{x_1,x_4\},\{x_2,x_3\},\{x_5,x_8\},\{x_6,x_7\}\right\},\\
&P_{\gamma}=\left\{\{x_1,x_7\},\{x_2,x_8\},\{x_3,x_5\},\{x_4,x_6\}\right\},\\
&P_{\delta}=\left\{\{x_1,x_6\},\{x_2,x_5\},\{x_3,x_8\},\{x_4,x_7\}\right\}
\end{aligned}\right\}.       
\end{align*}
Here $P_{y}=P_{y}(x_a:a\in[8])$ means the set of parallel classes with direction $y$ and so on.
\end{example}

\section{Tur\'{a}n density estimation of \texorpdfstring{$\PG(2,q)$}{PG(2,q)}} 
Before we estimate the Tur\'{a}n density of $\PG(2,q)$, we discuss a small case for $q=2$ first i.e., the Fano plane $\PG(2,2)$.
\begin{theorem}\label{PG(2,2)}
Each $3-$graph $H$ on $n$ vertices that does not contain a Fano plane has at most $\sqrt{\frac{2}{3}}\frac{n^3}{3!}-$many edges, i.e 
\begin{equation*}
\mathrm{ex}(n,\PG(2,2))\leq\sqrt{\frac{2}{3}}\frac{n^3}{3!}.
\end{equation*}
Moreover, the Tur\'{a}n density $\pi(\PG(2,2))\leq\sqrt{\frac{2}{3}}$.
\end{theorem}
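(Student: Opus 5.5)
The plan is to run the $\mathsf{L}^{p}$-method with $p=3$, using a local forbidden constraint attached to a fixed edge, which plays the role of the ``edge at infinity'' $\{x,y,z\}$ in the example above.

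\textbf{Setup.} Let $H$ be a Fano-free $3$-graph on vertex set $[n]$. Define $f:[n]^3\to\{0,1\}$ by $f(x,y,z)=1$ if and only if $\{x,y,z\}\in E(H)$. This $f$ is symmetric and vanishes when two arguments coincide, so
\begin{equation*}
\sum_{x,y,z}f(x,y,z)=6|E(H)|.
\end{equation*}
For a vertex $x$ put $d_x=\sum_{a,b}f(x,a,b)$, which is twice the degree of $x$. Then $\sum_x d_x=6|E(H)|$.

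\textbf{Local forbidden constraint.} Fix an ordered triple $(x,y,z)$ with $f(x,y,z)=1$ and any $(a,b,c,d)\in[n]^4$. Mirroring the labelling of $\PG(2,2)$ in the example, set
\begin{equation*}
A=f(x,a,b)f(x,c,d),\quad B=f(y,a,d)f(y,b,c),\quad C=f(z,a,c)f(z,b,d).
\end{equation*}
The claim is that $A+B+C\le 2$.

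To see this, suppose $A=B=C=1$. Since $f$ vanishes on repeated arguments, we get the following distinctness conditions:
\begin{itemize}
\item $A=1$ forces $a\neq b$ and $c\neq d$;
\item $B=1$ forces $a\neq d$ and $b\neq c$;
\item $C=1$ forces $a\neq c$ and $b\neq d$.
\end{itemize}
Moreover, each of $a,b,c,d$ occurs in one factor together with $x$, one with $y$ and one with $z$; for example $a$ appears in $f(x,a,b)$, $f(y,a,d)$ and $f(z,a,c)$. Hence each of $a,b,c,d$ is distinct from each of $x,y,z$. Finally $x,y,z$ are distinct because $f(x,y,z)=1$. So the seven vertices are distinct, and the seven edges form a copy of $\PG(2,2)$ in $H$, a contradiction. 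This distinctness check is the only place where Fano-freeness enters. I expect choosing this ``at most two of three'' constraint to be the key idea; the remaining steps are routine.

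\textbf{Global constraint and Fubini.} Multiply the local constraint by $f(x,y,z)$ and sum over all $x,y,z,a,b,c,d$. This gives
\begin{equation*}
\sum_{x,y,z}f(x,y,z)\sum_{a,b,c,d}(A+B+C)\le 2n^4\sum_{x,y,z}f(x,y,z)=2n^4\sum_x d_x.
\end{equation*}
The inner sums factor:
\begin{equation*}
\sum_{a,b,c,d}A=\Bigl(\sum_{a,b}f(x,a,b)\Bigr)^{2}=d_x^{2},
\end{equation*}
and similarly $\sum_{a,b,c,d}B=d_y^2$ and $\sum_{a,b,c,d}C=d_z^2$. Interchanging summations and using the symmetry of $f$,
\begin{equation*}
\sum_{x,y,z}f(x,y,z)\,d_x^{2}=\sum_x d_x\cdot d_x^{2}=\sum_x d_x^{3},
\end{equation*}
and the same value is obtained for the $d_y^2$ and $d_z^2$ terms. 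The left-hand side is therefore $3\sum_x d_x^3$, so
\begin{equation*}
3\sum_x d_x^{3}\le 2n^4\sum_x d_x.
\end{equation*}

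\textbf{H\"older step.} The stated inequality with $p=3$ and $|A|=n$ gives
\begin{equation*}
\sum_x d_x^3\ge \frac{1}{n^{2}}\Bigl(\sum_x d_x\Bigr)^{3}.
\end{equation*}
Combining with the global constraint,
\begin{equation*}
\Bigl(\sum_x d_x\Bigr)^2\le \tfrac23 n^6,\qquad\text{so}\qquad \sum_x d_x\le\sqrt{\tfrac23}\,n^3.
\end{equation*}
Dividing by $6$ yields $|E(H)|\le\sqrt{2/3}\,n^3/3!$. Since $\binom{n}{3}\sim n^3/6$, dividing by $\binom{n}{3}$ and letting $n\to\infty$ gives $\pi(\PG(2,2))\le\sqrt{2/3}$.
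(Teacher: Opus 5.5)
Your proposal is correct and follows the paper's proof essentially step by step. Both use the same ``at most two of three'' local constraint attached to a fixed edge $\{x,y,z\}$, the same factorization of the global sum into $3\sum_x d_x^3$ (up to normalization), and H\"older with $p=3$. The differences are cosmetic: you sum over ordered triples weighted by $f(x,y,z)$ rather than over unordered edges, which avoids the paper's implicit per-term choice of which vertex plays $x$; you spell out the distinctness check explicitly; and your $y/z$ labelling is swapped relative to the paper's proof, which does not matter.
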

\begin{proof}
Let $H$ be a $\PG(2,2)$(Fano plane)-free $3-$graph on $n-$vertices. Let $\{x,y,z\}\in E(H)$. Then for each $a,b,c,d\in V(H)$, we have the following local forbidden constraint.
\begin{align}\label{local PG(2,2)}
&\mathds{1}_{E(H)}(\{x,a,b\})\mathds{1}_{E(H)}(\{x,c,d\})+\mathds{1}_{E(H)}(\{y,a,c\})\mathds{1}_{E(H)}(\{y,b,d\})\notag\\
&\hspace{2cm}+\mathds{1}_{E(H)}(\{z,a,d\})\mathds{1}_{E(H)}(\{z,b,c\})\leq2.      
\end{align}
To see this, suppose (if possible),
\begin{align*}
&\mathds{1}_{E(H)}(\{x,a,b\})\mathds{1}_{E(H)}(\{x,c,d\})+\mathds{1}_{E(H)}(\{y,a,c\})\mathds{1}_{E(H)}(\{y,b,d\})\\
&\hspace{2cm}+\mathds{1}_{E(H)}(\{z,a,d\})\mathds{1}_{E(H)}(\{z,b,c\})=3.
\end{align*}
Then 
\begin{align*}
&\mathds{1}_{E(H)}(\{x,a,b\})=\mathds{1}_{E(H)}(\{x,c,d\})=\mathds{1}_{E(H)}(\{y,a,c\})=\mathds{1}_{E(H)}(\{y,b,d\})\\
&\hspace{2cm}=\mathds{1}_{E(H)}(\{z,a,d\})=\mathds{1}_{E(H)}(\{z,b,c\})=1.
\end{align*}
This implies that $x,y,z,a,b,c,d$ are all distinct with 
\begin{equation*}
\bigg\{\{x,a,b\},\{x,c,d\},\{y,a,c\},\{y,b,d\},\{z,a,d\},\{z,b,c\}\bigg\}\subset E(H).   
\end{equation*}
Together with $\{x,y,z\}\in E(H)$, this forms a copy of $\PG(2,2)$ in $H$.
A contradiction arises, since $H$ does not contain a copy of Fano plane. This establishes inequality~\eqref{local PG(2,2)}. Hence,
\begin{align}\label{global PG(2,2)}
\underset{\{x,y,z\}\in E(H)}{\sum}\underset{a,b,c,d\in V(H)}{\sum}
&\bigg(\mathds{1}_{E(H)}(\{x,a,b\})
\mathds{1}_{E(H)}(\{x,c,d\})+\mathds{1}_{E(H)}(\{y,a,c\})
\mathds{1}_{E(H)}(\{y,b,d\})+\notag\\
&\quad+\mathds{1}_{E(H)}(\{z,a,d\})
\mathds{1}_{E(H)}(\{z,b,c\})\bigg)
\leq 2n^4|E(H)|.
\end{align}
We refer to inequality~\eqref{global PG(2,2)} as the \emph{global forbidden constraint} for $\PG(2,2)$.

We now derive a lower bound for the left-hand side term of the inequality~\eqref{global PG(2,2)}. By symmetry, it suffices to consider the first term, as the second and third terms can be handled in the same way. Here, we apply H\"{o}lder's inequality at the last step.
\begin{align*}
&\underset{\{x,y,z\}\in E(H)}{\sum}\underset{a,b,c,d\in V(H)}{\sum}\mathds{1}_{E(H)}(\{x,a,b\})\mathds{1}_{E(H)}(\{x,c,d\})\\
=&\frac{1}{6}\underset{x,y,z\in V(H)}{\sum}\mathds{1}_{E(H)}(\{x,y,z\})\left(\underset{a,b\in V(H)}{\sum}\mathds{1}_{E(H)}(\{x,a,b\})\right)^2
=\frac{1}{6}\underset{x\in V(H)}{\sum}\left(\underset{a,b\in V(H)}{\sum}\mathds{1}_{E(H)}(\{x,a,b\})\right)^{3}\\
\geq&\frac{1}{6}\frac{1}{n^2}\left(\underset{x,a,b\in V(H)}{\sum}\mathds{1}_{E(H)}(\{x,a,b\})\right)^3=\frac{1}{6n^2}\left(3!|E(H)|\right)^3.
\end{align*}

Therefore,
\begin{tcolorbox}
\begin{equation*}
 3\frac{1}{6n^{2}}(3!|E(H)|)^3\leq2n^4|E(H)|   
\end{equation*}
\end{tcolorbox}
solving which we obtain the required result.
\end{proof}

\begin{remark}
The established bound is far from the actual Tur\'{a}n density of the Fano plane. But the above $\mathsf{L}^{p}-$method of deduction can be generalized for any prime power $q\geq3$. 
\end{remark}

\begin{theorem}\label{Turan density of projective plane}
Each $(q+1)-$graph $H$ on $n$ vertices that does not contain $(q+1)-$graph $\PG(2,q)$, contains at most  $\left(1-\frac{1}{q+1}\right)^{\frac{1}{q}}\frac{n^{q+1}}{(q+1)!}-$many edges, i.e 
\begin{equation*}
\mathrm{ex}(n,\PG(2,q))\leq\left(1-\frac{1}{q+1}\right)^{\frac{1}{q}}\frac{n^{q+1}}{(q+1)!}.   
\end{equation*}
Moreover, $\pi(\PG(2,q))\leq\left(1-\frac{1}{q+1}\right)^{\frac{1}{q}}$.
\end{theorem}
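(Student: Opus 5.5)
The plan is to generalize the proof of Theorem~\ref{PG(2,2)} using the explicit model $H(2,q)$ of $\PG(2,q)$. Fix an edge $e=\{u_0,\dots,u_q\}\in E(H)$, which will play the role of the line at infinity. The $q+1$ vertices $u_i$ correspond to the $q+1$ directions $\langle u\rangle$, $u\in\mathbb{F}_q^2\smallsetminus\{0\}$. Take a tuple $\mathbf{a}=(a_v)_{v\in\mathbb{F}_q^2}$ of $q^2$ vertices of $H$, playing the role of the affine points $x_v$. For each direction $u_i$, the parallel class $P_{u_i}$ consists of $q$ affine lines $L_{v,u}$, each of size $q$. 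Define
\begin{equation*}
F_i(\mathbf a)=\prod_{L\in P_{u_i}}\mathds{1}_{E(H)}\big(\{u_i\}\cup\{a_v:x_v\in L\}\big).
\end{equation*}

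\textbf{Local forbidden constraint.} The first step is to show $\sum_{i=0}^{q}F_i(\mathbf a)\leq q$. If every $F_i$ equalled $1$, then all $q(q+1)$ extended affine edges would lie in $E(H)$, and together with $e$ these give all $q^2+q+1$ edges of $\PG(2,q)$. We also need the vertices to be distinct. Any two affine points lie on a common affine line, which is an edge of $H$ and so consists of distinct vertices. Each $u_i$ lies in an edge with every $a_v$, and the $u_i$ are distinct because $e$ is an edge. So we would have a copy of $\PG(2,q)$, which is a contradiction. This is where $m=2$ matters: every affine pair is covered by an edge through some $u_i$.

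\textbf{Global constraint.} Summing over ordered choices of $e$ and over all $n^{q^2}$ tuples $\mathbf a$ gives
\begin{equation*}
\sum_{e}\sum_{\mathbf a}\sum_i F_i \leq q\,n^{q^2}\,(q+1)!\,|E(H)|,
\end{equation*}
where $e$ ranges over ordered $(q+1)$-tuples spanning an edge.

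\textbf{Lower bound via Fubini and H\"older.} Fix $i$. The lines of $P_{u_i}$ partition the affine points, so the sum over $\mathbf a$ factorizes as $d(u_i)^q$, where
\begin{equation*}
d(x)=\sum_{b_1,\dots,b_q\in V(H)}\mathds{1}_{E(H)}(\{x,b_1,\dots,b_q\})
\end{equation*}
is the ordered link degree. Summing over the remaining $q$ coordinates of the ordered edge then gives $\sum_{x}d(x)^{q+1}$. By H\"older's inequality this is at least
\begin{equation*}
n^{-q}\Big(\sum_x d(x)\Big)^{q+1}=n^{-q}\big((q+1)!\,|E(H)|\big)^{q+1}.
\end{equation*}
Each of the $q+1$ indices $i$ contributes the same amount, so
\begin{equation*}
(q+1)\,n^{-q}\big((q+1)!\,|E(H)|\big)^{q+1}\leq q\,n^{q^2}(q+1)!\,|E(H)|.
\end{equation*}
Rearranging yields
\begin{equation*}
\big((q+1)!\,|E(H)|\big)^{q}\leq \frac{q}{q+1}\,n^{q^2+q},
\end{equation*}
that is, $|E(H)|\leq \left(\tfrac{q}{q+1}\right)^{1/q}\dfrac{n^{q+1}}{(q+1)!}$. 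Comparing with $\binom{n}{q+1}\sim n^{q+1}/(q+1)!$ gives the bound on $\pi(\PG(2,q))$.

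The main obstacle is bookkeeping in the local constraint: checking that the full product structure really forces distinctness and the complete incidence structure, and that the counting factors for ordered versus unordered sets cancel correctly, as they did in the Fano case.
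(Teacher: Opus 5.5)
Your proposal is correct and is essentially the paper's proof: the same local constraint with the fixed edge as the line at infinity, the same factorization of the sum over the affine tuple along each parallel class into $d(u_i)^q$, and the same H\"older step, giving the identical final inequality. Summing over ordered $(q+1)$-tuples instead of unordered edges just multiplies both sides of the global constraint by $(q+1)!$, and your explicit distinctness check supplies what the paper leaves to ``similar arguments.'' One side remark needs correcting: every affine pair lies on an affine line for every $m$, and what is actually special about $m=2$ is that the $q+1$ directions form a single edge.
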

\begin{proof}
Let $\PG(2,q)$ be the two dimensional projective geometry over $\mathbb{F}_q$ and $H$ be a $\PG(2,q)-$free $(q+1)-$graph on $n-$vertices. Let $\{y_1,\cdots,y_{q+1}\}\in E(H)$ and $\{x_a:a\in\mathbb{F}_q^2\}\subseteq V(H)$. As $|S(x_a:a\in\mathbb{F}_q^2)|=q+1$, without loss of generality we denote 
\begin{equation*}
S(x_a:a\in\mathbb{F}_q^2)=\bigg\{P_1(x_a:a\in\mathbb{F}_q^2),\cdots,P_{q+1}(x_a:a\in\mathbb{F}_q^2)\bigg\}.    
\end{equation*}
Since $H$ is $\PG(2,q)-$free, the local forbidden constraint is
\begin{equation}\label{local PG(2,q)}
\underset{m\in[q+1]}{\sum}\underset{e\in P_m(x_a:a\in\mathbb{F}_q^2)}{\prod}\mathds{1}_{E(H)}(\{y_{m}\}\sqcup e)\leq q.
\end{equation}
If not, using similar arguments used to establish inequality~\eqref{local PG(2,2)}, $\{y_1,\cdots,y_{q+1}\}$ together with $\{x_a:a\in\mathbb{F}_q^2\}$ forms a copy of $\PG(2,q)$, which leads to a contradiction.

Hence, summing~\eqref{local PG(2,q)} over each edge $\{y_1,\ldots,y_{q+1}\}\in E(H)$ and each $\{x_a:a\in\mathbb{F}_q^2\}\subseteq V(H)$, we obtain the following global forbidden constraint.
\begin{equation}\label{global PG(2,q)}
\begin{aligned}
\underset{\{y_1,\cdots,y_{q+1}\}\in E(H)}{\sum}\underset{\{x_a:a\in\mathbb{F}_q^2\}\subseteq V(H)}{\sum}\underset{m\in[q+1]}{\sum}\underset{e\in P_m(x_a:a\in\mathbb{F}_q^2)}{\prod}\mathds{1}_{E(H)}(\{y_{m}\}\sqcup e)\leq qn^{q^2}|E(H)|.     
\end{aligned}   
\end{equation}

We now derive a lower bound for the left-hand side term of the inequality~\eqref{global PG(2,q)}. We calculate the lower bound for the first term $m=1$. The other terms can be estimated in the same way. While estimating the following expression, we use the structure of the parallel classes i.e. each parallel class partitions the set $\{x_a:a\in\mathbb{F}_q^2\}$ in the second line and H\"{o}lder's inequality in the last line.
\begin{align*}
&\underset{\{y_1,\cdots,y_{q+1}\}\in E(H)}{\sum}\underset{\{x_a:a\in\mathbb{F}_q^2\}\subseteq V(H)}{\sum}\bigg(\underset{e\in P_1(x_a:a\in\mathbb{F}_q^2)}{\prod}\mathds{1}_{E(H)}(\{y_{1}\}\sqcup e)\bigg)\\
=&\frac{1}{(q+1)!}\underset{y_1,\cdots,y_{q+1}\in V(H)}{\sum}\mathds{1}_{E(H)}(\{y_1,\cdots,y_{q+1}\})\bigg(\underset{x_1,\cdots,x_q\in V(H)}{\sum}\mathds{1}_{E(H)}(\{y_{1},x_1,\cdots,x_q\}\bigg)^q\\
=&\frac{1}{(q+1)!}\underset{y_1\in V(H)}{\sum}\bigg(\underset{y_2,\cdots,y_{q+1}\in V(H)}{\sum}\mathds{1}_{E(H)}(\{y_{1},\cdots,y_{q+1}\}\bigg)^{q+1}\\
\geq&\frac{1}{(q+1)!}\frac{1}{n^q}\bigg(\underset{y_1,\cdots,y_{q+1}\in V(H)}{\sum}\mathds{1}_{E(H)}(\{y_{1},\cdots,y_{q+1}\}\bigg)^{q+1}
=\frac{1}{(q+1)!}\frac{1}{n^q}\bigg((q+1)!|E(H)|\bigg)^{q+1}.
\end{align*}
Hence
\begin{align*}
&\underset{\{y_1,\cdots,y_{q+1}\}\in E(H)}{\sum}\underset{\{x_a:a\in\mathbb{F}_q^2\}\subseteq V(H)}{\sum}\underset{m\in[q+1]}{\sum}\underset{e\in P_m(x_a:a\in\mathbb{F}_q^2)}{\prod}\mathds{1}_{E(H)}(\{y_{m}\}\sqcup e)\\
&\hspace{2cm}\geq(q+1)\frac{1}{(q+1)!}\frac{1}{n^q}\bigg((q+1)!|E(H)|\bigg)^{q+1}.
\end{align*}

Therefore,
\begin{tcolorbox}
\begin{equation*}
(q+1)\frac{1}{(q+1)!}\frac{1}{n^q}\bigg((q+1)!|E(H)|\bigg)^{q+1}\leq qn^{q^2}|E(H)|    
\end{equation*}
\end{tcolorbox}
solving which we obtain the required result.
\end{proof}

\section{Tur\'{a}n density estimation of \texorpdfstring{$\PG(m,q),m\geq3$}{PG(m,q)}}

In this section, we generalize Theorem~\eqref{Turan density of projective plane} and calculate an upper bound of the Tur\'{a}n density of $\PG(m,q)$, for $m\geq3$. Before that, we discuss the case of $m=3$ and $q=2$ for better understanding of the generalization of $m=2$ to $m=3$.

\begin{theorem}
Each $3-$graph $H$ on $n$ vertices that does not contain a copy of $\PG(3,2)$ has at most $\left(\frac{20}{21}\right)^{\frac{1}{5}}\frac{n^3}{3!}-$many edges, i.e.
\begin{equation*}
\mathrm{ex}(n,\PG(3,2))\leq\left(1-\frac{1}{3|E(\PG(2,2))|}\right)^{\frac{1}{5}}\frac{n^3}{3!}.
\end{equation*}
Moreover, the Tur\'{a}n density $\pi(\PG(3,2))\leq\left(\frac{20}{21}\right)^{\frac{1}{5}}$.
\end{theorem}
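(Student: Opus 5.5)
The plan is to mimic the proof of Theorem~\ref{Turan density of projective plane}. The difference is that no edge is fixed in advance: the local forbidden constraint will range over all $7$ edges at infinity, i.e.\ the edges of a $\PG(2,2)$ sitting on the vertices at infinity. We use the model $H(3,2)$. It has $8$ affine vertices $x_1,\dots,x_8$, seven vertices at infinity $w\in\{x,y,z,\alpha,\beta,\gamma,\delta\}$, and seven parallel classes $P_w$, each consisting of $q^{m-1}=4$ disjoint pairs that partition $\{x_1,\dots,x_8\}$. Let $F$ denote the $7$ edges at infinity listed in the example of $\PG(3,2)$; each vertex at infinity lies in exactly $3$ of them.

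\textbf{Local constraint.} Let $H$ be $\PG(3,2)$-free. For every choice of vertices $w\in V(H)$ (for the seven points at infinity) and $x_1,\dots,x_8\in V(H)$, I claim
\begin{equation*}
\sum_{f\in F}\mathds{1}_{E(H)}(f)\sum_{v\in f}\prod_{e\in P_v}\mathds{1}_{E(H)}(\{v\}\sqcup e)\leq 20 .
\end{equation*}
There are $7\cdot 3=21$ summands. If all of them equal $1$, then every edge at infinity is in $E(H)$, and for every vertex at infinity $v$ every extended affine edge of $P_v$ is in $E(H)$. As in the proof of inequality~\eqref{local PG(2,2)}, this forces all $15$ vertices to be distinct, since otherwise some indicator would be evaluated on a set of size less than $3$. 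Hence all $35$ edges of $\PG(3,2)$ lie in $H$, which is a contradiction.

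\textbf{Global constraint.} Summing over all $n^{15}$ choices of the vertices gives the global constraint: the left-hand side summed over all choices is at most $20n^{15}$.

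\textbf{Lower bound for one term.} Fix one of the $21$ terms, say $f=\{x,y,z\}$ and $v=x$, and write $d(x)=\sum_{a,b}\mathds{1}_{E(H)}(\{x,a,b\})$. First sum over the $8$ affine vertices. Since $P_x$ partitions them into $4$ pairs, this sum factors as $d(x)^4$. The four vertices at infinity outside $f$ are free and contribute a factor $n^4$. Summing over $y,z$ then gives $\sum_{y,z}\mathds{1}_{E(H)}(\{x,y,z\})=d(x)$. So the term equals $n^4\sum_x d(x)^5$. By H\"{o}lder's inequality this is at least
\begin{equation*}
n^4\cdot n^{-4}\Big(\sum_x d(x)\Big)^5=(3!\,|E(H)|)^5 .
\end{equation*}
All $21$ terms are estimated identically.

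\textbf{Conclusion.} Combining the bounds gives
\begin{equation*}
21\,(3!\,|E(H)|)^5\leq 20\,n^{15},
\end{equation*}
that is, $|E(H)|\leq (20/21)^{1/5}n^3/3!$. Since $21=3|E(\PG(2,2))|$, this is the stated bound, and $\pi(\PG(3,2))\leq(20/21)^{1/5}$ follows.

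\textbf{Main obstacle.} The delicate step is choosing the local constraint so that two things hold at once. First, the edges at infinity must appear only as single indicator factors, so that they do not spoil the Fubini/H\"{o}lder step. Second, saturating every summand must still produce the full $\PG(3,2)$. Attaching $\mathds{1}_{E(H)}(f)$ to each vertex term $v\in f$, and counting each $v$ three times, achieves both.
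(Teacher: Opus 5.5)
Your proposal is correct and follows essentially the same route as the paper. It uses the same local constraint: sum $\mathds{1}_{E(H)}(f)$ times the parallel-class products over all $21$ incident pairs $(f,v)$ and bound the sum by $20$. It also uses the same global bound $20n^{15}$ and the same H\"{o}lder estimate $n^4\sum_x d(x)^5\geq(3!\,|E(H)|)^5$ for each of the $21$ terms. The one step you leave implicit, that a repeated vertex kills some indicator, holds because any two points of $\PG(3,2)$ lie on a common line.
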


\begin{proof}Let $H$ be $\PG(3,2)-$free. Then for each $x_{1},x_{2},x_{3},x_{4},x_{5},x_{6},x_{7},x_{8},x,y,z,\alpha,\beta,\gamma,\delta\in V(H)$ we have the following local forbidden constraint.
\begin{align}\label{local PG(3,2)}
\underset{\substack{E\in E(\PG(2,2))\\V(\PG(2,2))=\{x,y,z,\alpha,\beta,\gamma,\delta\}}}{\sum}\mathds{1}_{E(H)}(E)
&\bigg[\underset{u\in E}{\sum}\underset{A\in P_u(x_{i}:i\in[8])}{\prod}\mathds{1}_{E(H)}(\{u\}\sqcup A)\bigg]\leq3|E(\PG(2,2))|-1.
\end{align}
This implies the following global forbidden constraint.
\begin{align}\label{global PG(3,2)}
&\underset{\substack{x_{1},x_{2},x_{3},x_{4},x_{5},x_{6},x_{7},x_{8}\in V(H)\\x,y,z,\alpha,\beta,\gamma,\delta\in V(H)}}{\sum}\underset{\substack{E\in E(\PG(2,2))\\V(\PG(2,2))=\{x,y,z,\alpha,\beta,\gamma,\delta\}}}{\sum}\mathds{1}_{E(H)}(E)
\bigg[\underset{u\in E}{\sum}\underset{A\in P_u(x_{i}:i\in[8])}{\prod}\mathds{1}_{E(H)}(\{u\}\sqcup A)\bigg]\notag\\
&\hspace{7cm}\leq(3|E(\PG(2,2))|-1)n^{15}.
\end{align}
We calculate a lower bound below using H\"{o}lder's inequality for one edge at infinity. 
\begin{align*}
&\underset{\substack{x_{1},x_{2},x_{3},x_{4},x_{5},x_{6},x_{7},x_{8}\in V(H)\\x,y,z,\alpha,\beta,\gamma,\delta\in V(H)}}{\sum}\mathds{1}_{E(H)}(\{x,y,z\})\bigg[\underset{A\in P_x(x_{i}:i\in[8])}{\prod}\mathds{1}_{E(H)}(\{x\}\sqcup A)\\
&\hspace{4em}+\underset{A\in P_y(x_{i}:i\in[8])}{\prod}\mathds{1}_{E(H)}(\{y\}\sqcup A)+\underset{A\in P_z(x_{i}:i\in[8])}{\prod}\mathds{1}_{E(H)}(\{z\}\sqcup A)\bigg]\\
=&n^4\underset{\substack{x_{1},x_{2},x_{3},x_{4},x_{5},x_{6},x_{7},x_{8}\in V(H)\\x,y,z\in V(H)}}{\sum}\mathds{1}_{E(H)}(\{x,y,z\})\bigg[\underset{A\in P_x(x_{i}:i\in[8])}{\prod}\mathds{1}_{E(H)}(\{x\}\sqcup A)\\
&\hspace{4em}+\underset{A\in P_y(x_{i}:i\in[8])}{\prod}\mathds{1}_{E(H)}(\{y\}\sqcup A)+\underset{A\in P_z(x_{i}:i\in[8])}{\prod}\mathds{1}_{E(H)}(\{z\}\sqcup A)\bigg]\\
\end{align*}
We shall calculate the first term.
\begin{align*}
&\underset{\substack{x_{1},x_{2},x_{3},x_{4},x_{5},x_{6},x_{7},x_{8}\in V(H)\\x,y,z\in V(H)}}{\sum}\mathds{1}_{E(H)}(\{x,y,z\})\underset{A\in P_x(x_{i}:i\in[8])}{\prod}\mathds{1}_{E(H)}(\{x\}\sqcup A)\\
=&\underset{\substack{x_{1},x_{2},x_{3},x_{4},x_{5},x_{6},x_{7},x_{8}\in V(H)\\x,y,z\in V(H)}}{\sum}\mathds{1}_{E(H)}(\{x,y,z\})\mathds{1}_{E(H)}(\{x,x_{1},x_{2}\})\mathds{1}_{E(H)}(\{x,x_{3},x_{4}\})\\
&\hspace{7cm}\mathds{1}_{E(H)}(\{x,x_{5},x_{6}\})\mathds{1}_{E(H)}(\{x,x_{7},x_{8}\})\\
=&\underset{x\in V(H)}{\sum}\left(\underset{y,z\in V(H)}{\sum}\mathds{1}_{E(H)}(\{x,y,z\})\right)^5\\
\geq&\frac{1}{n^4}\left(\underset{x,y,z\in V(H)}{\sum}\mathds{1}_{E(H)}(\{x,y,z\})\right)^5
=\frac{1}{n^4}\left(3!|E(H)|\right)^5.
\end{align*}
Then
\begin{align*}
&\underset{\substack{x_{1},x_{2},x_{3},x_{4},x_{5},x_{6},x_{7},x_{8}\in V(H)\\x,y,z,\alpha,\beta,\gamma,\delta\in V(H)}}{\sum}\mathds{1}_{E(H)}(\{x,y,z\})\bigg[\underset{A\in P_x(x_{i}:i\in[8])}{\prod}\mathds{1}_{E(H)}(\{x\}\sqcup A)\\
&\hspace{4em}+\underset{A\in P_y(x_{i}:i\in[8])}{\prod}\mathds{1}_{E(H)}(\{y\}\sqcup A)+\underset{A\in P_z(x_{i}:i\in[8])}{\prod}\mathds{1}_{E(H)}(\{z\}\sqcup A)\bigg]\\
&\hspace{4cm}\geq n^{4}3\frac{1}{n^4}\left(3!|E(H)|\right)^5=3\left(3!|E(H)|\right)^5.
\end{align*}

Hence,
\begin{align*}
&\underset{\substack{x_{1},x_{2},x_{3},x_{4},x_{5},x_{6},x_{7},x_{8}\in V(H)\\x,y,z,\alpha,\beta,\gamma,\delta\in V(H)}}{\sum}\underset{\substack{E\in E(\PG(2,2))\\V(\PG(2,2))=\{x,y,z,\alpha,\beta,\gamma,\delta\}}}{\sum}\mathds{1}_{E(H)}(E)
\bigg[\underset{u\in E}{\sum}\underset{A\in P_u(x_{i}:i\in[8])}{\prod}\mathds{1}_{E(H)}(\{u\}\sqcup A)\bigg]\\
&\hspace{4cm}\geq|E(\PG(2,2))|3\left(3!|E(H)|\right)^5.
\end{align*}

Therefore,
\begin{tcolorbox}
\begin{equation*}
3|E(\PG(2,2))|\left(3!|E(H)|\right)^5\leq(3|E(\PG(2,2))|-1)n^{15}
\end{equation*}
\end{tcolorbox}
solving which we obtain the required result.
\end{proof}

\begin{theorem}\label{Turan density of projective hyperplane}
Each $(q+1)-$graph $H$ on $n$ vertices that does not contain a copy of $\PG(m,q)$ has at most $\left(1-\frac{1}{(q+1)|E(\PG(m-1,q))|}\right)^{\frac{1}{1+q^{m-1}}}\frac{n^{q+1}}{(q+1)!}$ edges, i.e.
\begin{equation*}
\mathrm{ex}(n,\PG(m,q))\leq\left(1-\frac{1}{(q+1)|E(\PG(m-1,q))|}\right)^{\frac{1}{1+q^{m-1}}}\frac{n^{q+1}}{(q+1)!}.
\end{equation*}
Moreover, the Tur\'{a}n density 
\begin{equation*}
\pi(\PG(m,q))\leq\left(1-\frac{1}{(q+1)|E(\PG(m-1,q))|}\right)^{\frac{1}{1+q^{m-1}}}=\left(1-\frac{(q-1)^{2}}{(q^{m}-1)(q^{m-1}-1)}\right)^{\frac{1}{1+q^{m-1}}}.    
\end{equation*}
\end{theorem}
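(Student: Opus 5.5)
We follow the template of the $\PG(3,2)$ case, using the decomposition $H(m,q)\cong\PG(m,q)$ from the Construction. Let $N_\infty=|V(\PG(m-1,q))|=1+q+\cdots+q^{m-1}$ and $N_E=|E(\PG(m-1,q))|$. Fix an identification of $N_\infty$ variables $\{u\}$ with the points at infinity $\{\langle u\rangle:u\in\mathbb{F}_q^m\smallsetminus\{0\}\}$, so that the edges at infinity form a copy of $\PG(m-1,q)$. We also take $q^m$ affine variables $\{x_a:a\in\mathbb{F}_q^m\}$. For each point at infinity $u$, the parallel class $P_u(x_a:a\in\mathbb{F}_q^m)$ consists of $q^{m-1}$ disjoint $q$-sets partitioning the affine variables.

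\textbf{Local forbidden constraint.} The constraint we aim for is
\begin{equation*}
\underset{E\in E(\PG(m-1,q))}{\sum}\mathds{1}_{E(H)}(E)\bigg[\underset{u\in E}{\sum}\underset{A\in P_u(x_a:a\in\mathbb{F}_q^m)}{\prod}\mathds{1}_{E(H)}(\{u\}\sqcup A)\bigg]\leq(q+1)N_E-1 .
\end{equation*}
Each bracket is at most $q+1$, so the left side is at most $(q+1)N_E$. Equality would force three things:
\begin{enumerate}[(i)]
\item every edge at infinity lies in $E(H)$;
\item every extended affine edge lies in $E(H)$, since every point at infinity lies on some edge at infinity when $m\geq3$;
\item all variables are distinct, since all these sets are edges of size $q+1$.
\end{enumerate}
Distinctness needs a check. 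Any two affine variables lie in a common extended affine edge. Any point at infinity and any affine variable lie in a common one. Any two points at infinity lie in a common edge at infinity. So if two variables coincided, some required edge would have fewer than $q+1$ distinct elements. Hence equality would give a copy of $\PG(m,q)$ in $H$, a contradiction. Summing over all $n^{q^m+N_\infty}$ tuples of variables gives the global constraint with right side $((q+1)N_E-1)n^{q^m+N_\infty}$.

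\textbf{Lower bound.} For a fixed edge at infinity $E=\{u_1,\dots,u_{q+1}\}$ and fixed $u=u_1\in E$, the variables at infinity outside $E$ are free, contributing a factor $n^{N_\infty-q-1}$. The parallel class $P_{u_1}$ partitions the affine variables into $q^{m-1}$ blocks, so summing over them factorizes:
\begin{equation*}
\underset{u_1}{\sum}\Big(\underset{u_2,\dots,u_{q+1}}{\sum}\mathds{1}_{E(H)}(\{u_1,\dots,u_{q+1}\})\Big)\Big(\underset{y_1,\dots,y_q}{\sum}\mathds{1}_{E(H)}(\{u_1,y_1,\dots,y_q\})\Big)^{q^{m-1}}=\underset{u_1}{\sum}d(u_1)^{1+q^{m-1}},
\end{equation*}
where $d(v)=\sum_{y_1,\dots,y_q}\mathds{1}_{E(H)}(\{v,y_1,\dots,y_q\})$. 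By Hölder's inequality this is at least
\begin{equation*}
n^{-q^{m-1}}\bigl((q+1)!\,|E(H)|\bigr)^{1+q^{m-1}} .
\end{equation*}
Summing over the $q+1$ choices of $u$ and the $N_E$ edges at infinity, the left side is at least $(q+1)N_E\,n^{N_\infty-q-1-q^{m-1}}((q+1)!|E(H)|)^{1+q^{m-1}}$.

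\textbf{Exponent count and conclusion.} Combining the two bounds gives
\begin{equation*}
\bigl((q+1)!|E(H)|\bigr)^{1+q^{m-1}}\leq\Big(1-\tfrac{1}{(q+1)N_E}\Big)\,n^{\,q^m+q+1+q^{m-1}}.
\end{equation*}
Since $(q+1)(1+q^{m-1})=q^m+q^{m-1}+q+1$, the right side is $(1-\tfrac{1}{(q+1)N_E})\,n^{(q+1)(1+q^{m-1})}$. Taking the $(1+q^{m-1})$-th root gives the bound on $\mathrm{ex}(n,\PG(m,q))$, and dividing by $\binom{n}{q+1}\sim n^{q+1}/(q+1)!$ gives the density bound. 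The closed form then follows from Theorem~\ref{properties of projective plane}(b) applied with $m-1$, after simplifying $(q+1)N_E$.

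\textbf{Main obstacle.} The delicate step is the equality case of the local constraint, in particular showing that all variables are distinct. Exact factorization under Fubini needs no extra work, because the blocks of each parallel class are disjoint.
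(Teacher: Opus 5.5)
Your proposal is correct and follows the paper's proof essentially step for step. It uses the same local constraint summed over the edges at infinity, the same factorization through the parallel class $P_{u_1}$ followed by H\"{o}lder with exponent $1+q^{m-1}$, and the same exponent count $(q+1)(1+q^{m-1})=q^m+q^{m-1}+q+1$. The only addition is that you explicitly justify why equality in the local constraint forces all variables to be distinct (every pair of points lies on a common edge of $\PG(m,q)$), which the paper leaves implicit.
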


\begin{proof}
Let $H$ be $\PG(m,q)-$free $(q+1)-$graph on $n$ vertices, $v_1,\cdots,v_{|V(\PG(m-1,q))|}\in V(H)$ and $\{x_a:a\in\mathbb{F}_q^m\}\subseteq V(H)$. Recall that $S(x_a:a\in\mathbb{F}_q^m)$ denotes the set of parallel classes $P_{x}(x_a:a\in\mathbb{F}_q^m)$ with direction $x$ of a copy of $\PG(m,q)$ with 
\begin{equation*}
 V(\PG(m,q))=\{x_a:a\in\mathbb{F}_q^m\}\sqcup\{v_1,\cdots,v_{|V(\PG(m-1,q))|}\}   
\end{equation*}
whose directions are in $\{v_1,\cdots,v_{|V(\PG(m-1,q))|}\}$. Then the local forbidden constraint is
\begin{align}\label{local PG(m,q)}
\underset{\substack{E\in E(\PG(m-1,q))\\V(\PG(m-1,q))=\{v_1,\cdots,v_{|V(\PG(m-1,q))|}\}}}{\sum}\mathds{1}_{E(H)}(E)&\left[\underset{x\in E}{\sum}\ \underset{A\in P_{x}(x_a:a\in\mathbb{F}_q^m)}{\prod}\mathds{1}_{E(H)}(x\sqcup\{A\})\right]\notag\\
&\leq|E(\PG(m-1,q))|(q+1)-1.
\end{align}
Hence, the global forbidden constraint is
\begin{align}\label{global PG(m,q)}
\underset{\substack{\{v_1,\cdots,v_{|V(\PG(m-1,q))|}\},\\\{x_a:a\in\mathbb{F}_q^m\}\subseteq V(H)}}{\sum}\underset{\substack{E\in E(\PG(m-1,q))\\V(\PG(m-1,q))=\\\{v_1,\cdots,v_{|V(\PG(m-1,q))|}\}}}{\sum}&\mathds{1}_{E(H)}(E)
\left[\underset{x\in E}{\sum}\ \underset{A\in P_{x}(x_a:a\in\mathbb{F}_q^m)}{\prod}\mathds{1}_{E(H)}(x\sqcup\{A\})\right]\notag\\
&\leq\left(|E(\PG(m-1,q))|(q+1)-1\right)n^{|V(\PG(m,q))|}.
\end{align}

Now we calculate a lower bound for the left hand side term in \eqref{global PG(m,q)} using H\"{o}lder's inequality. To do this, we calculate a single term for one edge $E\in E(\PG(m-1,q))$ and one $x\in E$. Without loss of generality, we assume $E=\{v_1,\cdots,v_{q+1}\}$ and $x=v_1$. The calculation is similar for the other terms.
\begin{align*}
&\underset{\substack{\{v_1,\cdots,v_{|V(\PG(m-1,q))|}\},\\\{x_a:a\in\mathbb{F}_q^m\}\subseteq V(H)}}{\sum}\mathds{1}_{E(H)}(\{v_1,\cdots,v_{q+1}\})
\left[\underset{A\in P_{v_1}(x_a:a\in\mathbb{F}_q^m)}{\prod}\mathds{1}_{E(H)}(\{v_1\}\sqcup\{A\})\right]\\
=&n^{|V(\PG(m-1,q))|-(q+1)}\underset{\{v_1,\cdots,v_{q+1}\}\subseteq V(H)}{\sum}\mathds{1}_{E(H)}(\{v_1,\cdots,v_{q+1}\})\\
&\hspace{7cm}\underset{\{x_a:a\in\mathbb{F}_q^m\}\subseteq V(H)}{\sum}\left[\underset{A\in P_{v_1}(x_a:a\in\mathbb{F}_q^m)}{\prod}\mathds{1}_{E(H)}(\{v_1\}\sqcup\{A\})\right]\\
=&n^{|V(\PG(m-1,q))|-(q+1)}\underset{\{v_1,\cdots,v_{q+1}\}\subseteq V(H)}{\sum}\mathds{1}_{E(H)}(\{v_1,\cdots,v_{q+1}\})\\
&\hspace{7cm}\left[\underset{x_1,\cdots,x_q\in V(H)}{\sum}\mathds{1}_{E(H)}(\{v_1,x_1,\cdots,x_q\})\right]^{|P_{v_1}|}\\
=&n^{|V(\PG(m-1,q))|-(q+1)}\underset{v_1\in V(H)}{\sum}\left[\underset{v_2,\cdots,v_{q+1}\in V(H)}{\sum}\mathds{1}_{E(H)}(\{v_1,\cdots,v_{q+1}\})\right]^{|P_{v_1}|+1}\\
\geq&n^{|V(\PG(m-1,q))|-(q+1)}\frac{1}{n^{|P_{v_1}|}}\left[\underset{v_1,\cdots,v_{q+1}\in V(H)}{\sum}\mathds{1}_{E(H)}(\{v_1,\cdots,v_{q+1}\})\right]^{|P_{v_1}|+1}\\
=&n^{|V(\PG(m-1,q))|-(q+1)-q^{m-1}}\left((q+1)!|E(H)|\right)^{q^{m-1}+1}\\
\end{align*}
Hence,
\begin{align*}
\underset{\substack{\{v_1,\cdots,v_{|V(\PG(m-1,q))|}\},\\\{x_a:a\in\mathbb{F}_q^m\}\subseteq V(H)}}{\sum}&\underset{\substack{E\in E(\PG(m-1,q))\\V(\PG(m-1,q))=\\\{v_1,\cdots,v_{|V(\PG(m-1,q))|}\}}}{\sum}\mathds{1}_{E(H)}(E)
\left[\underset{x\in E}{\sum}\ \underset{\substack{A\in P(x_a:a\in\mathbb{F}_q^m)\in S(x_a:a\in\mathbb{F}_q^m)\\\text{dir}P(x_a:a\in\mathbb{F}_q^m)=x}}{\prod}\mathds{1}_{E(H)}(x\sqcup\{A\})\right]\\
\geq&|E(\PG(m-1,q))|(q+1)n^{|V(\PG(m-1,q))|-(q+1)-q^{m-1}}\left((q+1)!|E(H)|\right)^{q^{m-1}+1}.
\end{align*}

Therefore,
\begin{tcolorbox}
\begin{align*}
|E(\PG(m-1,q))|&(q+1)n^{|V(\PG(m-1,q))|-(q+1)-q^{m-1}}\left((q+1)!|E(H)|\right)^{q^{m-1}+1}\\
&\leq\left(|E(\PG(m-1,q))|(q+1)-1\right)n^{|V(\PG(m,q))|},
\end{align*}
\end{tcolorbox}
solving which we obtain the required result.
\end{proof}

\section{Further results: Tur\'an density estimation of \texorpdfstring{$B_{t}[\PG(2,q)]$}{}}
We conclude our article with estimating the Tur\'{a}n density of book graph $B_t\left[\PG(2,q)\right]$. The formal construction and its definition are as follows. 
\begin{construction}
Let $G_{1},\ldots,G_{t}$ be $t-$many vertex disjoint copies of $\PG(2,q)$. For each $i\in[t]$ let $\phi_i:\mathbb{K}_{q+1}^{(q+1)}\hookrightarrow G_{i}$ be an embedding. We define an equivalence relation $\sim$ on $V(G_{1})\sqcup\cdots\sqcup V(G_{t})$ as follows: for $x,y\in V(G_{1})\sqcup\cdots\sqcup V(G_{t})$, we have $x\sim y$ if and only if either $x=y$ or there exists $v\in V(\mathbb{K}_{q+1}^{(q+1)})$ such that $x=\phi_i(v)$ and $y=\phi_j(v)$ for some $i,j\in[t]$.

We consider the quotient map  $q(x)=[x]$, for each $x\in V(G_{1})\sqcup\cdots\sqcup V(G_{t})$. Here $[x]$ denotes the equivalent class containing $x$.
\end{construction}

\begin{definition}
The \emph{book hypergraph} $B_t\left[\PG(2,q)\right]$ with pages $\PG(2,q)$ and \emph{spine} $\mathbb{K}_{q+1}^{(q+1)}$ consists of the following vertex set and edge set.
\begin{align*}
V(B_t\left[\PG(2,q)\right])&=\left(V(G_{1})\sqcup\cdots\sqcup V(G_{t})\right)/{\sim},\\
E(B_t\left[\PG(2,q)\right])&=\underset{i\in[t]}{\bigcup}\bigg\{Q(e):e\in E(G_i)\bigg\},
\end{align*}
where $Q(e):=\{Q(v):v\in e\}$.      
\end{definition}

\begin{example}
$B_{2}[\PG(2,2)]$ with spine $\mathbb{K}_{3}^{(3)}$ equals the following $3-$graph:-
\begin{align*}
\left\{\{x,y,z\},
\begin{aligned}
&\{x,a,b\},\{x,c,d\}, \{x,a',b'\}, \{x,c',d'\},\\
&\{y,a,c\},\{y,b,d\},\{y,a',c'\},\{y,b',d'\},\\
&\{z,a,d\},\{z,b,c\},\{z,a',d'\},\{z,b',c'\}    
\end{aligned}
\right\}.    
\end{align*}
\end{example}

In the following, we estimate the Tur\'{a}n density of $B_{t}[\PG(2,q)]$ for $t\geq2$, using the $\mathsf{L}^{p}-$method.

\begin{theorem}\label{Turan density of projective book}
For each $(q+1)-$graph $H$ on $n$ vertices that does not contain $t-$page book with pages $\PG(2,q)$, contains at most  $\left(\frac{q}{q+1}\right)^{\frac{1}{qt}}\frac{n^{q+1}}{(q+1)!}$ edges, i.e 
\begin{equation*}
\mathrm{ex}(n,B_{t}[\PG(2,q)])\leq\left(\frac{q}{q+1}\right)^{\frac{1}{qt}}\frac{n^{q+1}}{(q+1)!}.   
\end{equation*}
Moreover, $\pi(B_{t}[\PG(2,q)])\leq\left(\frac{q}{q+1}\right)^{\frac{1}{qt}}$.
\end{theorem}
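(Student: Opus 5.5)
Fix an edge $\{y_1,\ldots,y_{q+1}\}\in E(H)$ to play the spine, and take $t$ tuples of affine vertices $\{x^{(i)}_a:a\in\mathbb{F}_q^2\}$, $i\in[t]$, one for each page. The proof mimics Theorem~\ref{Turan density of projective plane}, but the local forbidden constraint is now a product over the pages. For each $i\in[t]$ put
\begin{equation*}
F_i=\prod_{m\in[q+1]}\ \prod_{e\in P_m(x^{(i)}_a:a\in\mathbb{F}_q^2)}\mathds{1}_{E(H)}(\{y_m\}\sqcup e)\in\{0,1\},
\end{equation*}
which is the indicator that the $i$-th page is present over the spine. The natural local constraint is $\prod_{i\in[t]}F_i=0$. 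To obtain an additive expression like \eqref{local PG(2,q)}, I will instead use
\begin{equation*}
\sum_{m\in[q+1]}\ \prod_{i\in[t]}\ \prod_{e\in P_m(x^{(i)}_a)}\mathds{1}_{E(H)}(\{y_m\}\sqcup e)\leq q .
\end{equation*}
If the sum equalled $q+1$, then every $F_i=1$. The vertices $y_m$ and the $x^{(i)}_a$ would then be distinct, because each page yields a copy of $\PG(2,q)$ (in which the vertices are distinct) and distinct pages share only the spine. This would give a copy of $B_t[\PG(2,q)]$ in $H$. I must justify that the pages are vertex-disjoint outside the spine. I would deal with this as the paper does for the earlier theorems: the degenerate tuples in which the pages collide number only $O(n^{tq^2-1})$, and they contribute $o$ of the main terms, so they do not affect the density bound. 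Alternatively, I would phrase the sum over injective tuples only and absorb the error into lower-order terms.

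Summing over edges and over all $tq^2$ affine vertices gives the global forbidden constraint. Its right-hand side is at most $q\,n^{tq^2}|E(H)|$.

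For the lower bound on the left-hand side I take a fixed $m$, say $m=1$. For each page $i$, the parallel class $P_1$ partitions the $q^2$ affine vertices into $q$ blocks of size $q$. The sum over the $x^{(i)}$ therefore factorises into $\bigl(\sum_{x_1,\ldots,x_q}\mathds{1}_{E(H)}(\{y_1,x_1,\ldots,x_q\})\bigr)^{q}$ per page, hence the exponent $qt$ over all pages. Let $d(y_1)$ be the sum $\sum_{y_2,\ldots,y_{q+1}}\mathds{1}_{E(H)}(\{y_1,\ldots,y_{q+1}\})$. The term is then
\begin{equation*}
\frac{1}{(q+1)!}\sum_{y_1}d(y_1)^{qt+1}\geq\frac{1}{(q+1)!}\frac{1}{n^{qt}}\bigl((q+1)!|E(H)|\bigr)^{qt+1}
\end{equation*}
by H\"older's inequality. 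Multiplying by $q+1$ for the $q+1$ directions and comparing with $q\,n^{tq^2}|E(H)|$ gives
\begin{equation*}
\bigl((q+1)!|E(H)|\bigr)^{qt}\leq\frac{q}{q+1}\,n^{qt(q+1)}.
\end{equation*}
This yields $|E(H)|\leq(q/(q+1))^{1/(qt)}\,n^{q+1}/(q+1)!$. The density statement follows by letting $n\to\infty$.

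The main obstacle is making the local constraint rigorous with respect to vertex coincidences. Distinctness within each page is inherited from the edge structure of $\PG(2,q)$, but distinctness across pages is not automatic. The bookkeeping that shows these coincidences contribute only lower-order terms is therefore the step needing care.
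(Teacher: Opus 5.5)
Your argument is the paper's argument. It uses the same local sum over the $q+1$ directions of a product over pages, and the same global constraint with right side $q\,n^{tq^2}|E(H)|$. It factorises through the parallel class in the same way, and the same H\"older step gives $\frac{q+1}{(q+1)!}n^{-qt}\bigl((q+1)!|E(H)|\bigr)^{qt+1}$.

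The paper never addresses cross-page coincidences. It writes down the global constraint by analogy with Theorem~\ref{Turan density of projective plane}. There distinctness comes for free, because any two affine points lie on a common line and hence inside a common edge indicator. So the earlier proofs contain no bookkeeping for you to imitate. Your worry is justified: for $t\ge2$ the pointwise inequality is false. Take $H=\PG(2,q)$, which is $B_t[\PG(2,q)]$-free since $q^2+q+1<q+1+tq^2$. Use a line $\{y_1,\ldots,y_{q+1}\}$, labelled as the points at infinity, as the spine, and let all $t$ page tuples equal the same labelled affine points. Then the local sum equals $q+1$.

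Your repair is sound, but it proves less than you claim. Tuples with a coincidence $x^{(k)}_a=x^{(l)}_b$, $k\neq l$, number at most $\binom{t}{2}q^4n^{tq^2-1}$ per spine. Each contributes at most $q+1$, i.e.\ at most $1$ more than the other tuples. So the honest global bound is $\bigl(q+\binom{t}{2}q^4/n\bigr)n^{tq^2}|E(H)|$, and the conclusion is
\[
|E(H)|\le\Bigl(\frac{q+\binom{t}{2}q^4/n}{q+1}\Bigr)^{1/(qt)}\frac{n^{q+1}}{(q+1)!}.
\]
Your sentence ``This yields \ldots'' silently reverts to the constant $q$. Hence the first display of the theorem, an exact bound for every $n$, is not established by your argument; the paper's proof has the same defect. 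Only the density statement follows. You should either state the finite-$n$ bound with a $(1+O(1/n))$ factor or supply a different argument for it.

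For the density statement there is also a shortcut. $B_t[F]$ embeds in the blow-up $F(t)$: send a spine vertex $u$ to $(u,1)$ and a non-spine vertex $v$ of page $k$ to $(v,k)$. By supersaturation, $\pi(F(t))=\pi(F)$. Hence $\pi(B_t[\PG(2,q)])=\pi(\PG(2,q))\le(q/(q+1))^{1/q}$ by Theorem~\ref{Turan density of projective plane}, which is stronger than the stated bound for $t\ge2$.
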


\begin{proof}
Here we use the argument analogous to that in the proof Theorem~\ref{Turan density of projective plane}. For a $B_{t}[\PG(2,q)]$-free $(q+1)-$graph $H$ on $n-$vertices, we have the following global forbidden constraint:-
\begin{equation}\label{global B_{t}[PG(2,q)]}
\underset{\{y_1,\cdots,y_{q+1}\}\in E(H)}{\sum}\underset{\{x_a^{(k)}:a\in\mathbb{F}_q^2,k\in[t]\}\subseteq V(H)}{\sum}\underset{m\in[q+1]}{\sum}\underset{k\in[t]}{\prod}\underset{e\in P_m(x_a^{(k)}:a\in\mathbb{F}_q^2)}{\prod}\mathds{1}_{E(H)}(\{y_{m}\}\sqcup e)\leq qn^{q^2t}|E(H)|.
\end{equation}

To better understand the expression~\ref{global B_{t}[PG(2,q)]}, we present the following illustrative example, for a $B_{2}[\PG(2,2)]$-free $3-$graph $H$ on $n-$vertices.
\begin{align*}
&\underset{\{x,y,z\}\in E(H)}{\sum}\underset{a,a',b,b',c,c',d,d'\in V(H)}{\sum}\bigg(\mathds{1}_{E(H)}(\{x,a,b\})\mathds{1}_{E(H)}(\{x,c,d\})\mathds{1}_{E(H)}(\{x,a',b'\})\mathds{1}_{E(H)}(\{x,c',d'\})+\\
&\hspace{2cm}+\mathds{1}_{E(H)}(\{y,a,c\})\mathds{1}_{E(H)}(\{y,b,d\})\mathds{1}_{E(H)}(\{y,a',c'\})\mathds{1}_{E(H)}(\{y,b',d'\})+\\
&\hspace{2cm}+\mathds{1}_{E(H)}(\{z,a,d\})\mathds{1}_{E(H)}(\{z,b,c\})\mathds{1}_{E(H)}(\{z,a',d'\})\mathds{1}_{E(H)}(\{z,b',c'\})\bigg)\leq2n^{8}|E(H)|.    
\end{align*}

Using similar arguments and applying H\"{o}lder's inequality at the appropriate steps, we obtain the lower bound $\frac{1}{(q+1)!}\frac{q+1}{n^{qt}}\left((q+1)!|E(H)|\right)^{qt+1}$ for the left hand side expression to~\ref{global B_{t}[PG(2,q)]}. Hence,
\begin{tcolorbox}
\begin{align*}
\frac{1}{(q+1)!}\frac{q+1}{n^{qt}}\left((q+1)!|E(H)|\right)^{qt+1}\leq qn^{q^2t}|E(H)|
\end{align*}
\end{tcolorbox}
solving which we obtain the required result.
\end{proof}

\begin{remark}
One can construct the $t-$page book where the spine is $\PG(m-1,q)$ and the pages are $\PG(m,q)$ and obtain similar results.
\end{remark}

\begin{acknowledgement}
The author Subhankar Dash acknowledges the National Institute of Science Education and Research (NISER), Bhubaneswar and the Homi Bhabha National Institute (HBNI), Mumbai, for financial support through a doctoral fellowship. This research work has no associated data.

The authors used an artificial intelligence (AI) based language assistant to improve the clarity and presentation of the manuscript, including suggestions on grammar, style and exposition. The mathematical results, proofs and conclusions are entirely the work of the authors, who take full responsibility for the content of the manuscript.
\end{acknowledgement}

\bibliographystyle{amsplain}

\end{document}